\documentclass[12pt,letterpaper]{article}
\usepackage{graphicx}
\usepackage{amsmath, amssymb, amsthm, mathrsfs,verbatim,tensor}
\usepackage[left=1in,top=1in,right=1in]{geometry}
\usepackage{setspace}

\newtheorem*{claim}{Claim}

\newtheorem{theorem}{Theorem}
\newtheorem{proposition}{Proposition}

\newcommand{\R}{\ensuremath{\mathbb{R}} }

\newcommand{\Z}{\ensuremath{\mathbb{Z}} }

\newcommand{\F}{\ensuremath{\mathcal{F}} }

\newcommand{\cf}{\mathbf{1}}
\renewcommand{\P}{\mathbb{P}}

\newcommand{\B}{\mathcal{B}}

\title{Strange uniform random variables}
\author{Douglas Rizzolo\footnote{This work was supported in part by NSF grant DMS-1204840 and in part by the National Science Foundation Graduate  Research Fellowship under Grant No. DGE 1106400.} \\ Department of Mathematics \\ University of Washington}
\date{\today}

\begin{document}
\maketitle

\begin{abstract}
In probability theory, there is a tendency to treat one random variable with a given distribution as being just as good as any other.  By and large this is fine because probability is (mostly) concerned with distributional properties of random variables.  However, every now and again we are forced to deal with non-distributional properties.  In this paper we investigate how different random variables with the same distribution can be.  Specifically, we construct random variables that are all uniformly distributed on the unit interval, but that nonetheless have strikingly different properties.
\end{abstract}

In probability theory, there is a tendency to treat one random variable with a given distribution as being just as good as any other.  By and large this is fine because probability is (mostly) concerned with distributional properties of random variables.  However, every now and again we are forced to deal with non-distributional properties.  For example, when working with L\'evy processes, one often works with a c\'adl\'ag version of the process even though the property of having c\'adl\'ag paths is not distributional.  Or, when working with Markov processes, one often works with the canonical version of the process on path space even though this is only one particular instance of the process.  These examples highlight a natural question: given two random variables with the same distribution, how different can they really be?

In this note, we will mostly restrict ourselves to considering random variables that are uniformly distributed on $[0,1]$.  Throughout this note we let $\B[0,1]$ be the Borel $\Omega$-algebra on $[0,1]$ and let $\lambda$ be Lebesgue measure on $[0,1]$ restricted to Borel sets.  We will let $(\Omega, \P, \F)$ denote an arbitrary, but not fixed, probability space.  Recall that a measurable function $X:\Omega \to [0,1]$ is called uniformly distributed if for all $A\in \B[0,1]$ we have $ \P(X^{-1}(A)) = \lambda(A)$.  From this point on, we will adopt the usual probabilistic convention of using $\P(X\in A)$ instead of $ \P(X^{-1}(A))$.

Let us begin our investigation by looking at the following scenario.  Suppose that $X$ and $Y$ are uniformly distributed random variables (not necessarily defined on the same basic probability space) and that $h:[0,1]\to \R$ is a function such that $h(X)$ and $h(Y)$ are Borel measurable.  Does it follow that $h(X)=_d h(Y)$; that is, are $h(X)$ and $h(Y)$ necessarily equal in distribution?  When $h$ itself is measurable the answer is clearly yes, but the case when $h$ is nonmeasurable is less clear.  We will show that the answer is no, $h(X)$ and $h(Y)$ need not be equal in distribution.  For the sake of full disclosure, we remark that this answer depends on using the Axiom of Choice.  Later on we will comment on what can happen if the Axiom of Choice is not assumed. 

Our first result is the following theorem.

\begin{theorem}\label{theorem 1}
Let $\mu$ and $\nu$ be two probability measures on $\R$.  There exist uniformly distributed random variables $X:\Omega_1\to [0,1]$ and $Y:\Omega_2 \to [0,1]$ and a function $h:[0,1]\to \R$ such that $h(X)$ is measurable and has law $\mu$ and $h(Y)$ is measurable and has law $\nu$.
\end{theorem}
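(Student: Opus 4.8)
The plan is to produce a single function $h:[0,1]\to\mathbb{R}$ that is ``universal'': for \emph{every} probability measure $\rho$ on $\mathbb{R}$ one can find a uniformly distributed $X$ with $h(X)$ measurable and of law $\rho$, and then $\mu$ and $\nu$ are just two instances. The key idea is to let the probability space be exotic rather than taking it to be $[0,1]$ with Lebesgue measure: we keep the underlying set $[0,1]$ but enlarge $\lambda$ to a measure on a larger $\sigma$-algebra on which $h$ itself becomes measurable with the prescribed law.

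\emph{Step 1: a thick partition.} Using the Axiom of Choice, I would first build a partition $[0,1]=\bigsqcup_{r\in\mathbb{R}}A_r$ into nonempty pieces, indexed by $\mathbb{R}$, with the property that every $A_r$ meets every closed subset of $[0,1]$ of positive Lebesgue measure; by inner regularity this is equivalent to $\lambda^{*}(A_r)=1$ (equivalently, $[0,1]\setminus A_r$ has inner Lebesgue measure $0$) for every $r$. This is a routine transfinite recursion: enumerate the positive-measure closed sets as $\{F_\alpha\}_{\alpha<\mathfrak c}$, well-order $\mathbb{R}\times\mathfrak c$ in order type $\mathfrak c$, and at stage $(r,\alpha)$ pick a point of $F_\alpha$ not used before — possible since $|F_\alpha|=\mathfrak c$ — and place it in $A_r$, finally dumping the leftover points of $[0,1]$ into one fixed piece. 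Now define $h:[0,1]\to\mathbb{R}$ by $h(x)=r$ iff $x\in A_r$; note $h$ depends on neither $\mu$ nor $\nu$. For any Borel $B\subseteq\mathbb{R}$ with $\emptyset\neq B\neq\mathbb{R}$, $h^{-1}(B)$ contains some $A_r$ and its complement contains some $A_{r'}$, so $h^{-1}(B)$ has outer measure $1$ and inner measure $0$; in particular $h$ is not Lebesgue measurable.

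\emph{Step 2: the graph is thick, then restrict and pull back.} The technical heart is that for every probability measure $\rho$ on $\mathbb{R}$, the graph $\Gamma=\{(x,h(x)):x\in[0,1]\}$ has full outer measure in $\bigl([0,1]\times\mathbb{R},\ \B[0,1]\otimes\B(\mathbb{R}),\ \lambda\otimes\rho\bigr)$: any Borel $R\supseteq\Gamma$ has each section $R^{t}=\{x:(x,t)\in R\}$ a Borel superset of $A_t$, hence $\lambda(R^{t})\ge\lambda^{*}(A_t)=1$ for every $t$, so by Tonelli $(\lambda\otimes\rho)(R)=\int\lambda(R^{t})\,\rho(dt)=1$. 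Since $\Gamma$ has full $(\lambda\otimes\rho)$-outer measure, the standard inner/outer-measure extension yields a probability measure on the trace $\sigma$-algebra $\{R\cap\Gamma:R\in\B[0,1]\otimes\B(\mathbb{R})\}$ given by $R\cap\Gamma\mapsto(\lambda\otimes\rho)(R)$. Transporting this along the bijection $x\mapsto(x,h(x))$, set $\Omega_\rho=[0,1]$, $\F_\rho=\sigma\bigl(\B[0,1]\cup\{h^{-1}(B):B\in\B(\mathbb{R})\}\bigr)$, let $\P_\rho(E)=(\lambda\otimes\rho)(R)$ whenever $E=\{x:(x,h(x))\in R\}$, and let $X_\rho=\mathrm{id}_{[0,1]}$. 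Then $X_\rho$ is a random variable with $\P_\rho(C)=(\lambda\otimes\rho)(C\times\mathbb{R})=\lambda(C)$ for Borel $C$, so it is uniformly distributed; and $h\circ X_\rho=h$ is $\F_\rho$-measurable with $\P_\rho(h^{-1}(B))=(\lambda\otimes\rho)([0,1]\times B)=\rho(B)$, so $h(X_\rho)$ has law $\rho$. Taking $\rho=\mu$ gives $X$ on $\Omega_1$ and $\rho=\nu$ gives $Y$ on $\Omega_2$, with the common function $h$, as required.

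I expect the main obstacle to be Step 1 together with recognizing that Step 2 is the right target: one has to see that the usable freedom is in the choice of probability space, and that ``$h$ universal'' reduces to the single geometric demand that every fibre of $h$ be Lebesgue-thick, so that the graph of $h$ threads through every Borel set of product measure $<1$. Once the thick partition is in hand, Tonelli, the inner/outer-measure extension, and the pull-back are all routine.
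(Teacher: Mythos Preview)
Your argument is correct. Both your proof and the paper's hinge on the same measure-theoretic core: a partition of $[0,1]$ into continuum-many pieces each of full outer measure (your Step~1 is a direct transfinite construction of exactly the Lusin--Sierpi\'nski partition the paper quotes as Theorem~6), together with the trace-measure extension on a set of full outer measure. Where you diverge is in how these ingredients are assembled. The paper fixes the uniform variables first---taking $X_\alpha$ to be the identity on the piece $A_\alpha$ equipped with its trace measure---and then tailors $h$ to the target laws by setting $h=F_\alpha^{-1}$ on $A_\alpha$; so $h$ depends on $\mu,\nu$ while the $X_\alpha$ do not. You invert this dependence: your $h$ is the partition-index map, fixed once and for all and independent of any law, and you instead vary the probability space, applying the trace-measure trick not to a piece $A_\alpha\subseteq[0,1]$ but to the graph $\Gamma\subseteq[0,1]\times\mathbb{R}$ with respect to $\lambda\otimes\rho$. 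This buys you a strictly stronger statement---a single universal $h$ working for \emph{every} law $\rho$ simultaneously---at the price of a slightly less elementary trace construction (a product space and a Tonelli step, rather than a one-dimensional restriction). The paper's version is more explicit about what $h$ does on each piece, and its uniform variables have the pleasant feature that their ranges are disjoint; yours keeps the range equal to $[0,1]$ and hides all the nonstandardness in the $\sigma$-algebra $\F_\rho$.
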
 

In fact, we can do even better (or worse, depending on your perspective).

\begin{theorem}\label{theorem 2}
Let $I$ be an index set with cardinality at most that of the continuum and let $\{\mu_\alpha\}_{\alpha\in I}$ be a collection of probability measures on $\R$.  There exists a family $\{X_\alpha\}_{\alpha\in I}$ of uniformly distributed random variables and a (single) function $h:[0,1]\to \R$ such that, for all $\alpha$, $h(X_\alpha)$ is measurable and has law $\mu_\alpha$.
\end{theorem}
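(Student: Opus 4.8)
The plan is to run the argument behind Theorem~\ref{theorem 1}, but to cut $[0,1]$ into continuum-many pieces rather than two. The underlying mechanism is the following. If $E\subseteq[0,1]$ has full outer measure, $\lambda^*(E)=1$, then one obtains a uniformly distributed random variable by taking $\Omega=E$, $\F=\{A\cap E:A\in\B[0,1]\}$ and $\P(A\cap E)=\lambda(A)$; this is a well-defined probability measure because $A\cap E=A'\cap E$ forces the Borel set $A\triangle A'$ into $E^c$, which has measure $\lambda_*(E^c)=1-\lambda^*(E)=0$, and the inclusion map $X:E\hookrightarrow[0,1]$ is then uniform. For such an $X$, $h(X)$ is just the restriction $h|_E$, and it is $\F$-measurable with law $\mu$ \emph{provided} $h$ agrees on $E$ with some Borel function $\phi:[0,1]\to\R$ satisfying $\lambda\circ\phi^{-1}=\mu$ (e.g.\ $\phi$ a generalized inverse of the distribution function of $\mu$): then $(h|_E)^{-1}(B)=\phi^{-1}(B)\cap E\in\F$ and $\P(h|_E\in B)=\lambda(\phi^{-1}(B))=\mu(B)$ for every Borel $B$. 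Hence the theorem reduces to producing \emph{pairwise disjoint} sets $\{E_\alpha\}_{\alpha\in I}$, each of full outer measure: for each $\alpha$ pick a Borel $\phi_\alpha$ with $\lambda\circ\phi_\alpha^{-1}=\mu_\alpha$, set $h=\phi_\alpha$ on $E_\alpha$ and $h\equiv 0$ off $\bigcup_\alpha E_\alpha$ (disjointness is exactly what makes this single function $h$ well defined), and let $X_\alpha:E_\alpha\hookrightarrow[0,1]$ carry the probability space just described.

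The substantive step --- and the only one making essential use of the Axiom of Choice --- is the lemma: \emph{$[0,1]$ can be partitioned into $\mathfrak{c}$ many sets, each of full outer Lebesgue measure} (and hence, since there are at least two of them, each of inner measure $0$ as well). I would prove this by transfinite recursion. Enumerate the positive-measure closed subsets of $[0,1]$ as $\{K_\eta:\eta<\mathfrak{c}\}$ --- there are exactly $\mathfrak{c}$ of them, and each has cardinality $\mathfrak{c}$ since an uncountable closed set contains a perfect set --- and fix an enumeration $\{(\eta_\gamma,\xi_\gamma):\gamma<\mathfrak{c}\}$ of $\mathfrak{c}\times\mathfrak{c}$. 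Recursively choose $x_\gamma\in K_{\eta_\gamma}$ distinct from all $x_\delta$, $\delta<\gamma$ (possible as $|K_{\eta_\gamma}|=\mathfrak{c}>|\gamma|$), and place $x_\gamma$ in class $\xi_\gamma$. Every class then meets every $K_\eta$, so no positive-measure closed set is disjoint from it; by inner regularity its complement has inner measure $0$, i.e.\ the class has full outer measure. Throwing the uncovered points of $[0,1]$ into one of the classes yields the desired partition.

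Granting the lemma, what remains is the bookkeeping of the first paragraph. Pick $|I|$-many of the pieces and index them by $I$ (any leftover pieces may be discarded, since the $E_\alpha$ need not cover $[0,1]$), obtaining pairwise disjoint full-outer-measure sets $\{E_\alpha\}_{\alpha\in I}$; build $h$ and the $X_\alpha$ as above; and check that each $X_\alpha$ is uniformly distributed and that $h(X_\alpha)=\phi_\alpha|_{E_\alpha}$ is measurable with law $\mu_\alpha$. I expect the partition lemma to be the crux of the argument; the remaining verifications are routine and parallel the proof of Theorem~\ref{theorem 1}.
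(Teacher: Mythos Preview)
Your proposal is correct and follows essentially the same architecture as the paper: reduce to finding continuum-many pairwise disjoint subsets of $[0,1]$ of full outer measure, build the uniform variables on each piece via the trace $\sigma$-algebra (the paper's Proposition~\ref{proposition subsets}), and patch the inverse distribution functions together along these pieces to define the single $h$. The only noteworthy difference is that the paper invokes the partition result as a classical theorem of Lusin--Sierpi\'nski (Theorem~\ref{theorem 6}) and only sketches the two-piece case via a Vitali/Halmos-style construction, whereas you supply a direct Bernstein-set transfinite recursion for the full continuum version.
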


Another way to look at the difference between two random variables is to look at the set of values they take.  More precisely, suppose that $X:\Omega_1\to [0,1]$ and $Y:\Omega_2\to [0,1]$ are uniformly distributed.  What can we say about $X(\Omega_1)\cap Y(\Omega_2)$?  Since $\lambda(\{x\})=0$ for all $x\in [0,1]$ there is no particular $x\in [0,1]$ that must be in this intersection, but how small can it really be?  Could it be empty?  This last question can be phrased more dramatically: is there a uniform random variable $X$ such that we can remove all of the values it obtains from $[0,1]$ and then build another uniform random variable that only takes values in what is left?  Perhaps surprisingly, the answer is yes.  We will prove the following strong version of the classical fact that a uniform random variable does not take any particular value.

\begin{theorem} \label{theorem 3}
There exist uniformly distributed random variables $X:\Omega_1\to [0,1]$ and $Y:\Omega_2 \to [0,1]$ such that $X(\Omega_1)\cap Y(\Omega_2) = \emptyset$.
\end{theorem}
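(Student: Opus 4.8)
The plan is to realize $X$ and $Y$ as the identity maps on two \emph{disjoint} ``fat'' subsets of $[0,1]$, each carrying the trace of Lebesgue measure. The first step is a set-theoretic construction: using the Axiom of Choice I would build a partition $[0,1] = A \sqcup B$ such that both $A$ and $B$ have full Lebesgue outer measure, equivalently, such that neither $A$ nor $B$ contains a Borel set (equivalently, a closed set) of positive measure. This is a Bernstein-type transfinite recursion. There are exactly $\mathfrak{c}$ closed subsets of $[0,1]$ of positive Lebesgue measure; enumerate them as $\{C_\xi\}_{\xi<\mathfrak{c}}$ by a well-ordering of length $\mathfrak{c}$. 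At stage $\xi$, since $C_\xi$ is an uncountable closed set it contains a perfect set and hence has cardinality $\mathfrak{c}$, while fewer than $\mathfrak{c}$ points have been selected at earlier stages; so I can choose two points $a_\xi, b_\xi \in C_\xi$ that are distinct from one another and from all previously chosen points. Put $A := \{a_\xi : \xi < \mathfrak{c}\}$ and $B := [0,1]\setminus A$; then $A$ meets every $C_\xi$, and $B$ contains every $b_\xi$ and hence also meets every $C_\xi$.

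By inner regularity of $\lambda$, any Borel set of positive measure contains a closed set of positive measure, so the construction guarantees that every Borel subset of $A$ and every Borel subset of $B$ is $\lambda$-null; equivalently $\lambda^*(A) = \lambda^*(B) = 1$. Now set $\Omega_1 := A$, let $\mathcal{F}_1 := \{A \cap E : E \in \B[0,1]\}$ be the trace $\sigma$-algebra, and define $\P_1(A\cap E) := \lambda(E)$. This is well defined because $A \cap E_1 = A \cap E_2$ forces the Borel set $E_1 \triangle E_2$ to be a subset of $B$, hence $\lambda$-null, so $\lambda(E_1)=\lambda(E_2)$; countable additivity follows from a standard disjointification of the Borel ``names'' of a disjoint sequence in $\mathcal{F}_1$, and $\P_1(A) = \lambda([0,1]) = 1$. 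Let $X:\Omega_1\to[0,1]$ be $X(\omega)=\omega$. Then $X^{-1}(E) = A\cap E \in \mathcal{F}_1$ for every Borel $E$, so $X$ is measurable, and $\P_1(X\in E) = \P_1(A\cap E) = \lambda(E)$, so $X$ is uniformly distributed; moreover $X(\Omega_1) = A$. Constructing $(\Omega_2,\mathcal{F}_2,\P_2)$ and $Y$ from $B$ in exactly the same way yields a uniform $Y$ with $Y(\Omega_2) = B$. Since $A$ and $B$ are disjoint, $X(\Omega_1)\cap Y(\Omega_2) = \emptyset$, as required.

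The main obstacle — and the only place the Axiom of Choice enters — is the transfinite recursion producing the partition into two sets of full outer measure, together with the verification that the trace set function $\P_1$ is a genuine (countably additive) probability measure, which is precisely where the full outer measure of $A$ is used. The remaining steps, namely the measurability and uniform distribution of the identity maps, are routine.
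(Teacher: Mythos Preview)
Your proof is correct and follows the same two-step template as the paper: first produce a partition $[0,1]=A\sqcup B$ with $\lambda^*(A)=\lambda^*(B)=1$, then equip each piece with the trace $\sigma$-algebra and trace measure and take the identity map as the uniform variable. This second step is exactly the paper's proposition characterizing which subsets of $[0,1]$ arise as ranges of uniform variables, and your verification that the trace set function is well defined and countably additive matches that argument. The genuine difference is in the first step. You build the partition by a Bernstein-style transfinite recursion: enumerate the $\mathfrak c$ closed subsets of $[0,1]$ of positive measure and at each stage pick two fresh points from the current closed set, one for $A$ and one for its complement. The paper instead sketches a Vitali--Steinhaus argument: take the group $\Z+\zeta\Z$ for irrational $\zeta$, split it by the parity of the integer part, choose (via Choice) a transversal for the induced equivalence relation on $\R$, and use the fact that the difference set of any set of positive measure contains a neighbourhood of $0$ to conclude that neither half has positive inner measure. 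Your route is more self-contained (no Steinhaus lemma needed) and makes the diagonalization against closed sets explicit; the paper's route avoids transfinite recursion and, for the continuum-many-pieces strengthening, simply cites the classical Lusin--Sierpi\'nski theorem rather than redoing the construction.
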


Again, we can prove a stronger version.

\begin{theorem}\label{theorem 4}
Let $I$ be an index set with cardinality at most that of the continuum. There exists a family $\{X_\alpha\}_{\alpha\in I}$ of uniformly distributed random variables defined on probability spaces $\{\Omega_\alpha\}_{\alpha\in I}$ such that $X_\alpha(\Omega_\alpha)\cap X_\beta(\Omega_\beta) = \emptyset$ whenever $\alpha\neq \beta$.
\end{theorem}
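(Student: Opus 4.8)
The plan is to reduce Theorem~\ref{theorem 4} to one set-theoretic fact: $[0,1]$ admits a partition $\{A_\alpha\}_{\alpha\in I}$ into pairwise disjoint sets each of full outer Lebesgue measure, $\lambda^*(A_\alpha)=1$. Granting this, each $A_\alpha$ will serve as the sample space of a uniformly distributed random variable $X_\alpha$ whose range is exactly $A_\alpha$, and then $X_\alpha(\Omega_\alpha)\cap X_\beta(\Omega_\beta)\subseteq A_\alpha\cap A_\beta=\emptyset$ for $\alpha\neq\beta$. This is the same mechanism that underlies Theorem~\ref{theorem 3}, now carried out for continuum-many pieces at once.

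First I would record how a full-outer-measure set becomes the range of a uniform random variable. Given $A\subseteq[0,1]$ with $\lambda^*(A)=1$, put $\Omega_A=A$, let $\mathcal{F}_A=\{A\cap C:C\in\B[0,1]\}$ be the trace $\sigma$-algebra, and define $\P_A(A\cap C)=\lambda(C)$. The only thing to check is that $\P_A$ is well defined and countably additive: if $A\cap C_1=A\cap C_2$ then the Borel set $C_1\triangle C_2$ is contained in $A^c$, and since $\lambda^*(A)=1$ forces the inner measure of $A^c$ to be $0$, we get $\lambda(C_1\triangle C_2)=0$ and hence $\lambda(C_1)=\lambda(C_2)$; countable additivity follows the same way after disjointifying the Borel sets. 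Then the inclusion $X_A:\Omega_A\to[0,1]$, $X_A(x)=x$, satisfies $X_A^{-1}(C)=A\cap C\in\mathcal{F}_A$ and $\P_A(X_A\in C)=\lambda(C)$, so $X_A$ is uniformly distributed, and $X_A(\Omega_A)=A$.

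It remains to produce the partition, which I would do by transfinite recursion. Enumerate the closed subsets of $[0,1]$ of positive measure as $\{F_\xi:\xi<\kappa\}$ with $\kappa=2^{\aleph_0}$, fix an injection $I\hookrightarrow\kappa$, and well-order $\kappa\times\kappa$ in order type $\kappa$. Running through the pairs $(\xi,\alpha)$ in this order, at each step strictly fewer than $\kappa$ points have been committed so far while $|F_\xi|=\kappa$, so I may pick a fresh point of $F_\xi$ and assign it to $A_\alpha$. When the recursion finishes, each $A_\alpha$ meets every closed set of positive measure, so $\lambda^*(A_\alpha)=1$ (otherwise $A_\alpha$ would be contained in a Borel set of measure $<1$ whose complement contains a closed set of positive measure disjoint from $A_\alpha$). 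Finally assign all still-unclaimed points of $[0,1]$ to one fixed $A_{\alpha_0}$ to make $\{A_\alpha\}_{\alpha\in I}$ an honest partition; this only enlarges $A_{\alpha_0}$. Setting $\Omega_\alpha=A_\alpha$ and $X_\alpha=X_{A_\alpha}$ then completes the proof. The only delicate point is the bookkeeping in the recursion — ordering the pairs so that the set of used points stays of size $<\kappa$ at every stage — since everything measure-theoretic here is routine and, for two pieces, already appears in the proof of Theorem~\ref{theorem 3}.
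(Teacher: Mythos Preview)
Your proposal is correct and follows exactly the paper's architecture: reduce Theorem~\ref{theorem 4} to (i) the fact that any set of full outer measure is the range of a uniform random variable (the paper's Proposition~\ref{proposition subsets}, whose trace-measure construction you reproduce essentially verbatim), and (ii) the Lusin--Sierpi\'nski partition (the paper's Theorem~\ref{theorem 6}). The paper simply cites Theorem~\ref{theorem 6} and combines, so your overall route is the same.

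Where you go beyond the paper is in actually proving the Lusin--Sierpi\'nski result: the paper only references the original and sketches, for contrast, the two-piece case (Theorem~\ref{theorem 7}) via a Vitali-type construction with the group $\{n+m\zeta\}$. Your transfinite recursion over the positive-measure closed sets is the standard modern argument and is correct as written; the key cardinality point --- that each such closed set has size $2^{\aleph_0}$ while fewer than $2^{\aleph_0}$ points have been used at any stage of a length-$\kappa$ recursion --- is exactly what makes the construction go through. One small cosmetic remark: since you only need $|I|$ many pieces, it is slightly cleaner to run the recursion over $\kappa\times I$ (well-ordered in type $\kappa$) rather than $\kappa\times\kappa$, but what you wrote works too once the surplus pieces are absorbed into $A_{\alpha_0}$.
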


We remark that Theorem \ref{theorem 4} is quite similar to \cite[Example 7.30]{WiHa93}, which proves essentially the same result using normally distributed random variables instead of uniformly distributed ones.  Both results are essentially probabilistic reformulations of a classical theorem from measure theory that we will come to in due course (for the impatient reader, it is Theorem \ref{theorem 6} below). 

Let $\lambda_*$ and $\lambda^*$ be Lebesgue inner and outer measure respectively.  We can also prove the following theorem.

\begin{theorem}\label{theorem 5}
There exists a family $\{X_\alpha\}_{\alpha\in [0,1]}$ of uniformly distributed random variables defined on probability spaces $\{\Omega_\alpha\}_{\alpha\in [0,1]}$ such that whenever $\alpha\neq \beta$ we have
\[ \lambda_*(X_\alpha(\Omega_\alpha)\cap X_\beta(\Omega_\beta)) = \min(\alpha,\beta) \quad \textrm{and} \quad \lambda^*(X_\alpha(\Omega_\alpha)\cap X_\beta(\Omega_\beta)) = \max(\alpha,\beta).\]
\end{theorem}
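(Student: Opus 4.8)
The plan is to realise each $X_\alpha$ as the identity map on a carefully chosen subset $E_\alpha\subseteq[0,1]$ of full outer measure. Recall the construction that makes this work: if $E\subseteq[0,1]$ has $\lambda^*(E)=1$, then $\F_E:=\{A\cap E:A\in\B[0,1]\}$ is a $\sigma$-algebra on $E$ and $\P_E(A\cap E):=\lambda(A)$ is a well-defined probability measure on it. Well-definedness is the one delicate point: if $A,B\in\B[0,1]$ with $A\cap E=B\cap E$, then $A\triangle B$ is a \emph{Borel} set contained in $E^{c}$, and although $E^{c}$ need not be null it has $\lambda_*(E^{c})=1-\lambda^*(E)=0$, so every measurable subset of $E^{c}$ — in particular $A\triangle B$ — is null; hence $\lambda(A)=\lambda(B)$. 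Countable additivity follows the same way after disjointifying the $A_n$ inside $\B[0,1]$. The identity map $X_E:E\to[0,1]$ is then uniformly distributed on $(E,\P_E,\F_E)$, with $X_E^{-1}(A)=A\cap E$ of $\P_E$-measure $\lambda(A)$, and $X_E(E)=E$. So it suffices to produce sets $\{E_\alpha\}_{\alpha\in[0,1]}$, each of full outer measure, with $\lambda_*(E_\alpha\cap E_\beta)=\min(\alpha,\beta)$ and $\lambda^*(E_\alpha\cap E_\beta)=\max(\alpha,\beta)$ whenever $\alpha\neq\beta$.

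For the sets I would invoke the classical decomposition of $[0,1]$ into continuum-many pairwise disjoint subsets each of full outer Lebesgue measure, recorded below as Theorem~\ref{theorem 6}; writing it as $\{D_\alpha\}_{\alpha\in[0,1]}$ (so $\lambda^*(D_\alpha)=1$ for every $\alpha$ and $D_\alpha\cap D_\gamma=\emptyset$ for $\alpha\neq\gamma$), I would put
\[ E_\alpha=[0,\alpha]\cup D_\alpha \qquad (\alpha\in[0,1]), \]
so in particular $E_1=[0,1]$. The idea is that the measurable initial segment $[0,\alpha]$ is a ``core'' of size exactly $\alpha$ that sits inside $E_\alpha\cap E_\beta$ whenever $\alpha\le\beta$, pinning the inner measure of the intersection at $\min(\alpha,\beta)$, while the pairwise disjoint sets $D_\alpha$ (which will turn out to have inner measure $0$) guarantee that the only extra mass in $E_\alpha\cap E_\beta$ for $\alpha<\beta$ comes from the piece $D_\alpha\cap(\alpha,\beta]$, whose outer measure is exactly the gap length $\beta-\alpha$.

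The verification then rests on two elementary measure-theoretic facts. First, every full-outer-measure $S\subseteq[0,1]$ is \emph{thick}: $\lambda^*(S\cap J)=\lambda(J)$ for every subinterval $J$, because $1=\lambda^*(S)\le\lambda^*(S\cap J)+\lambda^*(S\cap J^{c})\le\lambda^*(S\cap J)+\lambda(J^{c})$. Second, both $\lambda^*$ and $\lambda_*$ are additive across a measurable set: $\lambda^*([0,\alpha]\cup G)=\alpha+\lambda^*(G)$ and $\lambda_*([0,\alpha]\cup G)=\alpha+\lambda_*(G)$ whenever $G\subseteq(\alpha,1]$ (Carath\'eodory splitting for $\lambda^*$, and the dual identity $\lambda^*(T\setminus B)=\lambda(T)-\lambda_*(T\cap B)$ for $\lambda_*$). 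Granting these: $\lambda^*(E_\alpha)\ge\lambda^*(D_\alpha)=1$, so each $E_\alpha$ has full outer measure and the probability spaces above exist; and since the decomposition has more than one block, $D_\alpha\subseteq[0,1]\setminus D_\gamma$ for some $\gamma\neq\alpha$, whence $\lambda_*(D_\alpha)\le\lambda_*([0,1]\setminus D_\gamma)=1-\lambda^*(D_\gamma)=0$. For $\alpha<\beta$, disjointness of $D_\alpha$ and $D_\beta$ gives $E_\alpha\cap E_\beta=[0,\alpha]\cup(D_\alpha\cap(\alpha,\beta])$ (still valid when $\beta=1$, where the intersection is just $E_\alpha$), and the two facts then yield $\lambda_*(E_\alpha\cap E_\beta)=\alpha+\lambda_*(D_\alpha\cap(\alpha,\beta])=\alpha=\min(\alpha,\beta)$ (inner measure being monotone and $\lambda_*(D_\alpha)=0$) and $\lambda^*(E_\alpha\cap E_\beta)=\alpha+\lambda^*(D_\alpha\cap(\alpha,\beta])=\alpha+(\beta-\alpha)=\max(\alpha,\beta)$, the last step being thickness of $D_\alpha$ on $(\alpha,\beta]$. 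The case $\alpha=0$ is identical with the degenerate core $\{0\}$.

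The only genuinely non-trivial ingredient is the classical partition theorem, Theorem~\ref{theorem 6} (proved by a transfinite recursion of Bernstein type, to be given separately); once it is available, Theorem~\ref{theorem 5} is essentially bookkeeping. The two places where that bookkeeping needs care are exactly the ones flagged above: the well-definedness of the trace probability spaces, which works only because one identifies \emph{Borel} sets — whose symmetric difference lies in the inner-measure-zero set $E_\alpha^{c}$ and is therefore null, even though $E_\alpha^{c}$ itself carries large outer measure — and the additivity of $\lambda^*$ and $\lambda_*$ across the measurable cores $[0,\alpha]$, which is the mechanism that converts the trivial geometry of the intervals $[0,\alpha]$ into the precise values $\min(\alpha,\beta)$ and $\max(\alpha,\beta)$.
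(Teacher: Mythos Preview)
Your proposal is correct and follows essentially the same route as the paper: you take the Lusin--Sierpinski partition $\{D_\alpha\}$ from Theorem~\ref{theorem 6}, set $E_\alpha=[0,\alpha]\cup D_\alpha$, and realise $X_\alpha$ as the identity on the trace space $(E_\alpha,\F_{E_\alpha},\P_{E_\alpha})$ --- exactly the paper's construction $B_\alpha=A_\alpha\cup[0,\alpha]$ together with Proposition~\ref{proposition subsets}. The only difference is that the paper merely writes down $B_\alpha$ and stops, whereas you actually carry out the verification of the inner and outer measures via thickness and Carath\'eodory splitting; that verification is correct.
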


Theorems \ref{theorem 1} and \ref{theorem 2} can be obtained as consequences of Theorems \ref{theorem 3} and \ref{theorem 4} respectively. 

\begin{proof}[Proof of Theorem \ref{theorem 2}]
Let $\{X_\alpha\}_{\alpha\in I}$ of uniformly distributed random variables defined on probability spaces $\{\Omega_\alpha\}_{\alpha\in I}$ as in Theorem \ref{theorem 4}.  For $\alpha\in I$, let $F_\alpha$ be the cumulative distribution function for $\mu_\alpha$ and let $F^{-1}_\alpha$ be its right-continuous inverse.  Define $h:[0,1]\to \R$ by
\[ h(x) = \sum_{\alpha \in I} F_{\alpha}^{-1}(x) \cf(x \in X_\alpha(\Omega_\alpha)).\]
Note that for each $x\in [0,1]$ at most one term in the sum is non-zero, so there is no need to worry about the fact that it is summing over a large index set.
\end{proof}

It remains to prove Theorem \ref{theorem 4}.  The first step is to figure out exactly which subsets of $[0,1]$ can be the range of a uniform random variable.  Suppose that $X:\Omega \to [0,1]$ is uniformly distributed and consider $X(\Omega)$.  Observe that $X(\Omega)$ must intersect every set of positive measure since for all $A\in \B[0,1]$ we have $\P(X\in A) = \P[X\in (A\cap X(\Omega))]$.  Thus $\lambda^*(X(\Omega)) = 1$.  In fact, this condition is also sufficient for a subset of $[0,1]$ to be the range of a uniform random variable.  Precisely,

\begin{proposition}\label{proposition subsets}
A subset of $[0,1]$ is the range of some uniform random variable if and only if it has outer measure $1$.
\end{proposition}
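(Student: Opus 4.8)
The ``only if'' direction is already contained in the discussion preceding the proposition, so the plan is to prove the ``if'' direction: given $E\subseteq[0,1]$ with $\lambda^*(E)=1$, I must build a probability space and a uniformly distributed $X$ whose range is exactly $E$. The natural candidate is to take $\Omega=E$, to let $\F=\{B\cap E: B\in\B[0,1]\}$ be the trace $\sigma$-algebra, to let $X\colon E\hookrightarrow[0,1]$ be the inclusion map, and to define a measure by $\P(B\cap E)=\lambda(B)$. Once this recipe is known to produce a genuine probability measure we are done: $X$ is measurable since $X^{-1}(B)=B\cap E\in\F$, it is uniformly distributed since $\P(X\in B)=\P(B\cap E)=\lambda(B)$, and $X(\Omega)=E$ by construction.

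The main obstacle is showing that $\P$ is well defined, and this is exactly where the hypothesis $\lambda^*(E)=1$ is used. I would isolate the following ``thickness'' lemma: if $\lambda^*(E)=1$ then $\lambda^*(B\cap E)=\lambda(B)$ for every $B\in\B[0,1]$. The inequality $\lambda^*(B\cap E)\le\lambda(B)$ is trivial; for the reverse, pick a Borel set $H$ with $B\cap E\subseteq H\subseteq B$ and $\lambda(H)=\lambda^*(B\cap E)$ (a measurable hull of $B\cap E$, intersected with $B$). Since $E\subseteq H\cup([0,1]\setminus B)$, monotonicity and finite additivity of $\lambda$ give $1=\lambda^*(E)\le\lambda(H)+\lambda([0,1]\setminus B)=\lambda^*(B\cap E)+1-\lambda(B)$, which rearranges to $\lambda^*(B\cap E)\ge\lambda(B)$. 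Granting the lemma, well-definedness is immediate: if $B_1\cap E=B_2\cap E$ then $\lambda(B_1)=\lambda^*(B_1\cap E)=\lambda^*(B_2\cap E)=\lambda(B_2)$.

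It then remains to check the measure axioms, which are bookkeeping. Nonnegativity and $\P(\Omega)=\lambda([0,1])=1$ are clear. For countable additivity, given pairwise disjoint $S_n=B_n\cap E\in\F$ I would ``disjointify'' the Borel representatives by setting $B_n'=B_n\setminus\bigcup_{m\neq n}B_m$; using that the $S_n$ are disjoint one checks $B_n'\cap E=S_n$, while the $B_n'$ are themselves pairwise disjoint, so
\[
\P\Big(\bigcup_n S_n\Big)=\P\Big(\Big(\bigcup_n B_n'\Big)\cap E\Big)=\lambda\Big(\bigcup_n B_n'\Big)=\sum_n\lambda(B_n')=\sum_n\P(S_n).
\]
With $\P$ verified to be a probability measure, the three properties of $X$ noted in the first paragraph finish the proof. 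I expect the only parts needing genuine care to be the thickness lemma and the identity $B_n'\cap E=S_n$; everything else is routine.
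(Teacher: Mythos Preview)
Your proof is correct and follows essentially the same construction as the paper: the trace $\sigma$-algebra on $E$, the inclusion map as $X$, and the measure $\P(B\cap E)=\lambda(B)$. The only cosmetic difference is that the paper proves well-definedness directly (if $B\cap E=C\cap E$ then $B\setminus(B\cap C)\subseteq[0,1]\setminus E$, which has inner measure $0$), whereas you route through the thickness identity $\lambda^*(B\cap E)=\lambda(B)$; your explicit verification of countable additivity via disjointified representatives is a detail the paper leaves to the reader.
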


\begin{proof}
From our discussion above, it remains to show that if $A\subseteq [0,1]$ has outer measure $1$ then there is a uniform random variable with range $A$.  We will construct such a random variable explicitly.  Let $\B_A[0,1] = \{C\cap A : C\in \B[0,1]\}$.  It is not hard to check that $\B_A$ is a $\sigma$-algebra on $A$.  

\begin{claim}
If $B,C\in \B[0,1]$ and $B\cap A = C\cap A$ then $\lambda(B)=\lambda(C)$.
\end{claim}

\begin{proof}[Proof of claim]
In this case we have $A\cap B\cap C = A\cap B = A\cap C$, so that $B\setminus (B\cap C) \subseteq [0,1]\setminus A$.  Since $\lambda^*(A)=1$ this implies that $\lambda(B \setminus (B\cap C)) =0$, so that $\lambda(B) = \lambda(B\cap C)$.  Similarly, $\lambda(C) = \lambda(B\cap C)$, which proves the claim.
\end{proof}

We define $\lambda_A : \B_A[0,1] \to [0,1]$ by $\lambda_A(C\cap A) = \lambda(C)$.  The claim shows that $\lambda_A$ is well defined.  It is straightforward to check that $\lambda_A$ is a probability measure on $(A,\B_A[0,1])$.  Define $X:(A,\lambda_A,\B_A[0,1]) \to ([0,1],\B[0,1])$ by $X(x)=x$.  It follows immediately that $X$ is uniformly distributed and has range equal to $A$.
\end{proof}

Proposition \ref{proposition subsets} reduces the proof of Theorem \ref{theorem 4} to finding continuumly many pairwise disjoint subsets of $[0,1]$ each of which has outer measure $1$.  Fortunately for us, this is a classical result from measure theory due to Lusin and Sierpinksi,

\begin{theorem}[Lusin and Sierpinski \cite{SiLu17}] \label{theorem 6}
There exists a partition of $[0,1]$ into continuumly many pairwise disjoint sets, each of which has outer measure $1$.
\end{theorem}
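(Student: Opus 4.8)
The plan is to build the partition via transfinite recursion, exploiting the fact that there are exactly $\mathfrak{c}$ many closed sets of positive measure in $[0,1]$. First I would fix an enumeration $\{F_\xi : \xi < \mathfrak{c}\}$ of all closed subsets of $[0,1]$ having positive Lebesgue measure, where $\mathfrak{c} = |\R|$ is identified with the least ordinal of that cardinality, so that every such $F$ appears as some $F_\xi$ and, crucially, every initial segment $\{\eta : \eta < \xi\}$ has cardinality strictly less than $\mathfrak{c}$. The target index set has size $\mathfrak{c}$, so I would also fix a partition of the ordinal interval $[0,\mathfrak{c})$ into $\mathfrak{c}$ many pieces $\{S_\alpha\}_{\alpha < \mathfrak{c}}$, each of size $\mathfrak{c}$ and each cofinal in $[0,\mathfrak{c})$; concretely one can take a bijection $[0,\mathfrak{c}) \to [0,\mathfrak{c}) \times [0,\mathfrak{c})$ and let $S_\alpha$ be the preimage of $\{\alpha\} \times [0,\mathfrak{c})$, relabeling the second coordinate by ordinals below $\mathfrak{c}$.

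The heart of the argument is the recursion: I would choose distinct points $x_\xi \in [0,1]$ for $\xi < \mathfrak{c}$ such that $x_\xi \in F_\xi$ and $x_\xi \notin \{x_\eta : \eta < \xi\}$. This is possible because at stage $\xi$ the set of already-used points has cardinality at most $|\xi| < \mathfrak{c}$, while $F_\xi$, being closed of positive measure, has cardinality exactly $\mathfrak{c}$ (it contains a nonempty perfect set), so $F_\xi \setminus \{x_\eta : \eta < \xi\}$ is nonempty and a choice is available (invoking the Axiom of Choice). Now for each $\alpha < \mathfrak{c}$ set
\[ A_\alpha = \{ x_\xi : \xi \in S_\alpha \} \cup R_\alpha, \]
where the $R_\alpha$ are chosen to absorb the "leftover" points of $[0,1]$ not of the form $x_\xi$: partition $[0,1] \setminus \{x_\xi : \xi < \mathfrak{c}\}$ arbitrarily into $\mathfrak{c}$ pieces $\{R_\alpha\}_{\alpha<\mathfrak{c}}$ (allowing empties). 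By construction the $A_\alpha$ are pairwise disjoint and their union is all of $[0,1]$.

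It remains to check each $A_\alpha$ has outer measure $1$. Suppose not: then $[0,1] \setminus A_\alpha$ contains a measurable set of positive measure, hence (by inner regularity) a closed set $F$ of positive measure, so $F = F_\xi$ for some $\xi$. Since $S_\alpha$ is cofinal in $[0,\mathfrak{c})$, there is some $\eta \in S_\alpha$ with $\eta \ge \xi$; but then consider $F_\eta$ — no wait, I need $F_\xi$ itself to receive a point of $A_\alpha$, so the correct move is: because $S_\alpha$ is cofinal, there is $\eta \in S_\alpha$ with $\eta > \xi$ and with $F_\eta = F_\xi$ (re-enumerate so each positive-measure closed set appears $\mathfrak{c}$ times, cofinally — I would build this into the initial enumeration). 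Then $x_\eta \in F_\eta = F_\xi \subseteq F$ and $x_\eta \in A_\alpha$, contradicting $F \subseteq [0,1]\setminus A_\alpha$. Hence $\lambda^*(A_\alpha) = 1$ for every $\alpha$, and $\{A_\alpha\}_{\alpha < \mathfrak{c}}$ is the desired partition. The main obstacle is bookkeeping the two cardinality demands simultaneously — each closed positive-measure set must be "hit" by a point landing in every block $A_\alpha$ — which is handled cleanly by arranging the enumeration so that every such set recurs cofinally often and each $S_\alpha$ is cofinal; the cardinality inequality $|\xi| < \mathfrak{c}$ at each stage is what keeps the point-selection possible.
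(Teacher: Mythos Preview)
Your transfinite-recursion argument is essentially correct and is the standard modern proof of the Lusin--Sierpinski theorem. Note, however, that the paper does \emph{not} prove Theorem~6 at all: it simply cites the original references and then sketches only the weaker Theorem~7 (a partition into \emph{two} sets of full outer measure) via a Vitali-type construction using the group $\{n+m\zeta\}$ and its even/odd cosets. So your approach is not just different in route---it actually establishes the full statement that the paper leaves to the literature. The tradeoff is that the paper's two-set argument is elementary and explicit (no transfinite recursion, only the classical ``difference set contains an interval'' lemma), whereas your method scales to continuum-many pieces at the cost of a well-ordering of $\mathfrak{c}$ and a cardinality bookkeeping step.

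One point in your write-up deserves tightening. The inference ``$S_\alpha$ is cofinal and each closed positive-measure set recurs cofinally often, hence some $\eta\in S_\alpha$ has $F_\eta=F$'' is not valid as stated: two cofinal subsets of $\mathfrak{c}$ can be disjoint. What you actually need is that the enumeration $\xi\mapsto F_\xi$, when restricted to each $S_\alpha$, is already surjective onto the family of closed positive-measure sets. The cleanest fix is to tie the enumeration to the bijection you already introduced: write $\phi(\xi)=(\alpha_\xi,\beta_\xi)$, fix a single listing $\{G_\beta:\beta<\mathfrak{c}\}$ of the closed positive-measure sets, and set $F_\xi:=G_{\beta_\xi}$. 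Then $S_\alpha=\{\xi:\alpha_\xi=\alpha\}$ automatically contains, for every $\beta$, an index $\xi$ with $F_\xi=G_\beta$, and your contradiction goes through without any appeal to cofinality. With that adjustment the proof is complete.
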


The original reference for this Theorem is \cite{SiLu17}, and this paper has recently been reprinted in \cite[p. 26]{Sina03}.  See also \cite{Abia76} for a slightly more general version where $[0,1]$ is replaced by an arbitrary closed set and the condition of having outer measure $1$ is replaced by having full outer measure.  

Something like the Axiom of Choice is needed to prove Theorem \ref{theorem 6}.  To see this, let $(A_\alpha)_{\alpha \in I}$ be a partition of $[0,1]$ into continuumly many sets such that $\lambda^*(A_\alpha)=1$ for all $\alpha\in I$.  Observe that for each $\alpha$, $A_\alpha$ must be non-Lebesgue-measurable since we also have $\lambda^*([0,1]\setminus A_\alpha)=1$.  The existence of non-measurable sets is connected, to some degree, with the Axiom of Choice.  For example, there exists a model of the Zermelo-Frankel axioms of set theory (without Choice) in which the set of real numbers is a countable union of countable sets (see \cite[Theorem 10.6]{Jech73}).  In such a model, all sets are Borel, so nothing like what we have been discussing here can happen.  Of course, it is also near impossible to do any analysis in such a model.  Even if we allow enough Choice to do most analysis, things can go wrong.  For instance (under some additional assumptions) there models of the Zermelo-Frankel axioms with the Axiom of Dependent Choice in which every set of reals is Lebesgue measurable (see \cite{Shel84, Solo70} for details).   

\begin{proof}[Proofs of Theorems \ref{theorem 4} and \ref{theorem 5}]
For Theorem \ref{theorem 4}, let $(A_\alpha)_{\alpha \in I}$ be a partition of $[0,1]$ as in Theorem \ref{theorem 6} and for each $\alpha\in I$ let $X_\alpha$ be a uniform random variable with range $A_\alpha$, as guaranteed to exist by Proposition \ref{proposition subsets}.  For Theorem \ref{theorem 5}, let $(A_\alpha)_{\alpha \in [0,1]}$ be a partition of $[0,1]$ as in Theorem \ref{theorem 6}.  Define $B_\alpha = A_\alpha \cup [0,\alpha]$ and each $\alpha\in [0,1]$ let $X_\alpha$ be a uniform random variable with range $B_\alpha$, again as guaranteed to exist by Proposition \ref{proposition subsets}.
\end{proof}

For the proof of Theorem \ref{theorem 6} we refer the reader to the references mentioned above.  We content ourselves to sketch the proof of the following weaker theorem,

\begin{theorem}\label{theorem 7}
There exists a partition of $[0,1]$ into two sets, each of which has outer measure $1$.
\end{theorem}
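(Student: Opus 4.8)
The plan is to construct two disjoint sets $E$ and $[0,1]\setminus E$, each of outer measure $1$. The standard approach uses a transfinite enumeration of the ``test sets'' we need to hit. First I would recall that every closed set of positive Lebesgue measure has cardinality equal to that of the continuum; more to the point, a set $E\subseteq[0,1]$ has outer measure $1$ if and only if $E$ meets every closed (equivalently, every Borel, equivalently every $G_\delta$) set $F\subseteq[0,1]$ with $\lambda(F)>0$ — because $\lambda^*(E)<1$ would mean $E$ is contained in a Borel set of measure $<1$, whose complement is a positive-measure Borel set disjoint from $E$, and inside that complement one can find a closed set of positive measure disjoint from $E$. So the goal reduces to: build $E$ so that both $E$ and its complement meet every closed set of positive measure.

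Next I would enumerate the closed subsets of $[0,1]$ of positive measure as $\{F_\xi : \xi < \mathfrak{c}\}$, using the fact that there are exactly $\mathfrak{c}$ closed subsets of $[0,1]$ and well-ordering the continuum (this is where Choice enters). I would then build, by transfinite recursion on $\xi<\mathfrak{c}$, two injective ``choice'' sequences of distinct points $(a_\xi)_{\xi<\mathfrak c}$ and $(b_\xi)_{\xi<\mathfrak c}$ with $a_\xi, b_\xi \in F_\xi$. At stage $\xi$, fewer than $\mathfrak{c}$ points have been chosen so far, namely $\{a_\eta, b_\eta : \eta<\xi\}$, a set of cardinality $<\mathfrak c$; since $|F_\xi| = \mathfrak{c}$, I can pick two new distinct points $a_\xi, b_\xi \in F_\xi$ avoiding all previously chosen points. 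Set $E = \{a_\xi : \xi<\mathfrak c\}$ and note that $\{b_\xi : \xi<\mathfrak c\} \subseteq [0,1]\setminus E$ by construction (the $a$'s and $b$'s are kept disjoint). For each $\xi$, $F_\xi$ contains the point $a_\xi \in E$ and the point $b_\xi \notin E$, so $F_\xi$ meets both $E$ and $[0,1]\setminus E$; hence both sets have outer measure $1$, and they partition $[0,1]$.

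The main obstacle — really the only nontrivial ingredient — is the cardinality fact that every closed subset of $[0,1]$ of positive measure has exactly $\mathfrak c$ points, which I would get from the Cantor--Bendixson theorem (an uncountable closed set contains a perfect set, hence $\mathfrak c$ points) together with the observation that a positive-measure set is uncountable. One must also verify the bookkeeping: at each stage $<\mathfrak c$ points have been used, which requires that $\mathfrak c$ is not a countable (or small) union issue — but since $|\xi|<\mathfrak c$ and we add two points per stage, the set of used points at stage $\xi$ has cardinality $\le |\xi| + |\xi| = |\xi| < \mathfrak c$, so there is always room. Everything else is routine transfinite recursion, and no serious calculation is needed.
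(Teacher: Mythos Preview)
Your argument is correct: this is the classical Bernstein-set construction, and every step is sound. However, it is genuinely different from the paper's proof. The paper follows Halmos and uses the algebraic/Vitali route: take $B=\{n+m\zeta:n,m\in\Z\}$ for an irrational $\zeta$, split $B$ into its even and odd parts $B_e$ and $B_o$, choose (via the Axiom of Choice) a transversal $C$ for the cosets of $B$ in $\R$, and set $M=C+B_e$. The key observation is that the difference set $D(M)$ misses the dense set $B_o$, so by the Steinhaus theorem $M$ contains no measurable subset of positive measure; since $\R\setminus M=M+1$, the same holds for the complement, and intersecting with $[0,1]$ gives the desired partition. Your approach trades the Steinhaus theorem and the group structure of $\R$ for Cantor--Bendixson and a transfinite recursion over the $\mathfrak{c}$ closed sets of positive measure. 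What you gain is generality and extensibility: your argument works verbatim in any uncountable Polish space with a nonatomic Borel probability measure, and it adapts immediately to produce continuum-many disjoint pieces of full outer measure (which is exactly Theorem~\ref{theorem 6}). What the paper's argument gains is concreteness and economy of background---it recycles the standard Vitali construction that most readers already know and needs no descriptive set theory---at the cost of being tied to the additive structure of $\R$ and not obviously scaling to more than two pieces.
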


For us, the advantage of proving Theorem \ref{theorem 7} instead of Theorem \ref{theorem 6} is that we can prove it using a minor variation of the standard construction of a non-measurable set.  We follow the approach in \cite[Section 16]{Halm74}, to which we refer for a more detailed version of the proof.

\begin{proof}
Let $\zeta$ be an irrational number and let $B = \{n+m\zeta : n,m\in \Z\}$.  Further define $B_e=\{n+m\zeta \in A : n \textrm{ is even}\}$ and $B_o= B\setminus B_e$.  It is not difficult to see that $B$, $B_e$, and $B_o$ are each dense subsets of $\R$.  We define an equivalence relation on $\R$ by $x\sim y$ if and only if $x-y\in B$.  Using the Axiom of Choice, we let $C\subseteq \R$ be a set containing exactly one representative of each equivalence class in $\R/\sim$.  Let $M=C+B_e$.  Note that the difference set $D(M):= \{m_1-m_2 : m_1,m_2\in M\}$ is disjoint from $B_o$.  However, the difference set of any measurable set with positive Lebesgue measue contains an open interval about the origin.  Since $B_o$ is dense, it follows that $M$ has no subsets of positive measure.  Furthermore, we see that $\R\setminus M = M+1$, so the complement of $M$ also contains no subset of positive measure.  It follows that if we define $A_1=M\cap [0,1]$ and $A_2= [0,1]\setminus M$ then $\{A_1,A_2\}$ is a partition of $[0,1]$ into two sets, each of which has outer measure $1$.
\end{proof}

\bibliographystyle{plain}
\bibliography{uniformbib}

\end{document}